\newtheorem{thm}{Theorem}
\newtheorem{lemma}[thm]{Lemma}
\newcommand{\R}{\mathbb{R}}
\newcommand{\E}{\mathbb{E}}
\newcommand{\Prob}{\mathbb{P}}
\newcommand{\C}{\mathbb{C}}
\newcommand{\inprod}[2]{\left\langle #1, #2 \right\rangle}
\newcommand{\abs}[1]{\left\vert #1 \right\vert}
\newcommand{\norm}[1]{\left\Vert #1 \right\Vert}
\newcommand{\nnorm}[1]{\norm{#1}'}
\newcommand{\eps}{\varepsilon}
\author{Elizabeth S.\ Meckes}
\thanks{E.\ Meckes's research is supported by an American Institute of Mathematics Five-year Fellowship and NSF grant DMS-0852898.}
\address{Case Western Reserve University, Cleveland, OH 44106, U.S.A.}
\email{elizabeth.meckes@case.edu}
\author{Mark W.\ Meckes}
\thanks{M.\ Meckes's research is supported by NSF grant DMS-0902203.}
\address{Case Western Reserve University, Cleveland, OH 44106, U.S.A.}
\email{mark.meckes@case.edu}
\title{Another observation about operator compressions}
\begin{document}

\begin{abstract}
Let $T$ be a self-adjoint operator on a finite dimensional Hilbert
space. It is shown that the distribution of the eigenvalues of a
compression of $T$ to a subspace of a given dimension is almost the
same for almost all subspaces. This is a coordinate-free analogue of a
recent result of Chatterjee and Ledoux on principal submatrices. The
proof is based on measure concentration and entropy techniques, and
the result improves on some aspects of the result of Chatterjee and
Ledoux.
\end{abstract}

\maketitle


\section{Introduction}\label{S:intro}

Let $T$ be an operator on a (real or complex) $n$-dimensional Hilbert
space $\mathcal{H}$, and let $E \subseteq \mathcal{H}$ be a subspace.
The \emph{compression} of $T$ to $E$ is the operator $T_E = \pi_E
T\vert_E = \pi_E T \pi_E^*$ on $E$, where $\pi_E:\mathcal{H}\to E$ is
the orthogonal projection. The spectral distribution of a self-adjoint
operator $T$ is the probability measure on $\R$
\[
\frac{1}{n} \sum_{i=1}^n \delta_{\lambda_i(T)},
\]
where $\lambda_1(T)\ge \dotsb \ge \lambda_n(T)$ are the eigenvalues
of $T$, counted with multiplicity.

The following result shows that for $1\le k \le n$ and a self-adjoint
operator $T$ on an $n$-dimensional Hilbert space $\mathcal{H}$, the
empirical spectral distribution of the compression $T_E$ is almost the
same for almost every $k$-dimensional subspace $E \subseteq
\mathcal{H}$.  The notations $\sigma_k$ and $\rho$ are explained after
the statement of the theorem; $d_1$ denotes the Kantorovich-Rubinstein
metric on probability measures, also defined below.

\begin{thm}\label{T:main}
Let $\mathcal{H}$ be an $n$-dimensional Hilbert space, $T$ a
self-adjoint operator on $\mathcal{H}$, and $1\le k \le n$. Let $E$ be
a $k$-dimensional subspace of $\mathcal{H}$ chosen at random with
respect to the rotationally invariant probability measure on the
Grassmann manifold.  Let $\mu_E$ be the empirical spectral
distribution of the compression of $T$ to $E$, and let $\mu=\E
\mu_E$. Then
\begin{equation}\label{E:mean}
\E d_1(\mu_E,\mu) \le c_1 \frac{\sigma_k(T)^{4/7} \rho(T)^{3/7}}{(kn)^{2/7}}
\end{equation}
and
\begin{equation}\label{E:tail}
\Prob \left[ d_1(\mu_E,\mu) \ge c_1 \frac{\sigma_k(T)^{4/7}
    \rho(T)^{3/7}}{(kn)^{2/7}} + t \right]
  \le c_2 \exp \left[-c_3 \frac{kn}{\sigma_k(T)^2} t^2\right]
\end{equation}
for every $t>0$, where $c_1,c_2,c_3>0$ are absolute (computable)
constants.
\end{thm}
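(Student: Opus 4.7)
The approach is to combine the Kantorovich--Rubinstein dual representation of $d_1$ with Gaussian concentration of measure on the Grassmannian, applied to a well-chosen net of Lipschitz test functions.

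First I would write $d_1(\mu_E,\mu) = \sup_f \abs{F_f(E) - \E F_f}$, where $F_f(E) := \int f \, d\mu_E = \frac{1}{k}\tr f(T_E)$ and the supremum is over $1$-Lipschitz $f:\R\to\R$. Since $\mu_E$ and $\mu$ are supported on a bounded spectral interval of length of order $\rho(T)$, I can restrict this supremum to $1$-Lipschitz functions on that interval modulo additive constants, which drop out of $F_f - \E F_f$.

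Next comes the Lipschitz estimate for each fixed $F_f$ viewed as a function on the Grassmannian. I parametrize an $E$ by its orthogonal projection $P_E$ and analyze $\frac{1}{k}\tr_\mathcal{H} f(P_E T P_E)$, normalizing $f$ so that $f(0)=0$ (which does not affect $F_f - \E F_f$). Three ingredients then combine: the trace inequality $\abs{\tr f(A) - \tr f(B)} \le \norm{f}_{\mathrm{Lip}} \norm{A - B}_1$ for self-adjoint $A,B$; the rank bound $\operatorname{rank}(P_1 T P_1 - P_2 T P_2) \le 2k$, which gives $\norm{\cdot}_1 \le \sqrt{2k}\norm{\cdot}_2$; and the decomposition $P_1 T P_1 - P_2 T P_2 = P_1 T(P_1-P_2) + (P_1-P_2)T P_2$ together with an appropriate operator-norm bound on $T$. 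These should produce a Lipschitz constant for $F_f$ of order $\norm{f}_{\mathrm{Lip}}\,\sigma_k(T)/\sqrt{kn}$ in the Hilbert--Schmidt metric on projections. Standard Gaussian concentration on the Grassmannian (whose Ricci curvature is of order $n$) then yields, for each fixed $1$-Lipschitz $f$,
$$\Prob\bigl[\abs{F_f(E) - \E F_f(E)} > t\bigr] \le 2\exp\!\left(-c_3 \frac{kn \, t^2}{\sigma_k(T)^2}\right),$$
which is the single-function analogue of \eqref{E:tail}.

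To upgrade this to a bound on the supremum over all $1$-Lipschitz $f$, I would use a sup-norm $\eta$-net $\net_\eta$ of the $1$-Lipschitz functions on the spectral interval, modulo constants; the standard metric entropy estimate gives $\log\abs{\net_\eta} \le C\rho(T)/\eta$. Since $\norm{f - f'}_\infty \le \eta$ implies $\abs{F_f(E) - F_{f'}(E)} \le \eta$ uniformly in $E$, a union bound yields
$$\Prob\bigl[d_1(\mu_E,\mu) > 2\eta + s\bigr] \le 2\exp\!\left(C\,\frac{\rho(T)}{\eta} - c_3\,\frac{kn \, s^2}{\sigma_k(T)^2}\right)$$
for every $\eta, s > 0$. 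Optimizing $\eta$ so as to balance the entropy cost $C\rho/\eta$ against the concentration scale gives the mean bound \eqref{E:mean}, and retaining an additional deviation $t$ outside the main term yields the tail bound \eqref{E:tail} (alternatively, the mean bound can be recovered by integrating the tail).

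The main obstacle is twofold. First, the Lipschitz estimate must be executed carefully enough to produce the precise constant $\sigma_k(T)/\sqrt{kn}$: the rank trick must be used to extract the factor $\sqrt{k}$, and the concentration on the Grassmannian must be invoked to supply the further $\sqrt{n}$. Second, the unusual exponents $4/7$, $3/7$, and $2/7$ in \eqref{E:mean} indicate that the two-scale net-plus-union-bound optimization above is delicate and may need to be tuned via an intermediate smoothing scale (or refined by a chaining-type argument) to extract the stated rate rather than a cruder one.
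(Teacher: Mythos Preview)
Your approach is correct and in fact genuinely different from---and somewhat sharper than---the paper's.

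\medskip

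\emph{Lipschitz estimate.} The paper proves the bound $d_1(\mu_E,\mu_F)\le \frac{2\sigma_k}{\sqrt{k}}\,d(E,F)$ by writing out the matrix entries $\langle Te_j,e_i\rangle$ and $\langle Tf_j,f_i\rangle$, applying Lidskii's inequality, and invoking $\sum_{j\le k}\|Te_j\|^2\le \|T\|_{(k),2}^2$. Your projection route is different: from $|\tr f(A)-\tr f(B)|\le |f|_L\|A-B\|_1$, the rank bound, and $\|P_1T(P_1-P_2)\|_2\le \|T\|_{op}\|P_1-P_2\|_2$, you obtain (after the scalar shift) a Lipschitz constant of order $\rho/\sqrt{k}$, which is \emph{at least as good} as $\sigma_k/\sqrt{k}$ since $\rho\le\sigma_k$. (A small wording issue: the Lipschitz constant itself is of order $\sigma_k/\sqrt{k}$ or $\rho/\sqrt{k}$; the extra $\sqrt n$ enters only through the concentration inequality, not the Lipschitz bound.)

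\medskip

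\emph{Entropy step.} Here the two arguments diverge substantially. The paper does \emph{not} use a sup-norm net. Because the subgaussian increment in $X_f-X_g$ is governed by $|f-g|_L$, the paper passes to the $C^1$ norm, and since covering numbers of the Lipschitz ball in $C^1$ are infinite, it instead applies Dudley's entropy bound to the unit ball of $C^2$, then recovers Lipschitz test functions by a mollification $g_t=f*\varphi_t$, and finally optimizes a rescaling $T\mapsto sT$. This three-step procedure (chaining in $C^1$ over the $C^2$ ball, smoothing, rescaling) is what produces the exponents $4/7,\,3/7,\,2/7$.

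Your single-scale scheme is more elementary and avoids all of this: you exploit that every net element still has $|f'|_L\le 1$, so concentration holds with the \emph{same} variance proxy for every net point, while sup-norm proximity controls $|X_f-X_{f'}|$ deterministically. Carrying out your optimization gives
\[
\E d_1(\mu_E,\mu)\ \lesssim\ \eta+\frac{\sigma_k}{\sqrt{kn}}\sqrt{\rho/\eta}
\ \Longrightarrow\ \E d_1(\mu_E,\mu)\ \lesssim\ \frac{\sigma_k^{2/3}\rho^{1/3}}{(kn)^{1/3}},
\]
which, using $\sigma_k\le\sqrt{k}\,\rho$, is never worse than the paper's $\sigma_k^{4/7}\rho^{3/7}/(kn)^{2/7}$. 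So your worry about needing a ``refined chaining-type argument'' to hit the stated exponents is unfounded: your direct bound already implies \eqref{E:mean}, and \eqref{E:tail} then follows exactly as you indicate. What the paper's route buys is a demonstration of the Dudley/smoothing machinery; what yours buys is a shorter argument and a slightly better rate.
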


Here $\rho(T)=\frac{1}{2}(\lambda_1-\lambda_n)$ denotes one half the
spectral diameter of $T$ (which is different in general from the
classical spectral radius); it is easy to check that $\rho(T)$ is the
distance of $T$ from the space of real scalar operators with respect
to the operator norm.  For $1\le k \le n$,
\[
\sigma_k(T) = \inf_{\lambda \in \R} \sqrt{\sum_{i=1}^k s_i(T-\lambda I)^2},
\]
where $s_1\ge \dotsb \ge s_n \ge 0$ denote singular values.  That is,
$\sigma_k(T)$ is the distance of $T$ from the space of real scalar
operators with respect to the norm $\norm{T}_{(k),2} =
\sqrt{\sum_{i=1}^k s_i(T)^2}$.

The space of probability measures (with finite first moment) on $\R$
is equipped with the Kantorovich-Rubinstein or $L^1$-Wasserstein
distance $d_1$, which may be equivalently defined in the following three
ways:
\begin{equation}\label{E:KR}
d_1(\mu,\nu) = \inf_{\pi} \int_{\R\times \R} \abs{x-y} \ d\pi(x,y)
= \sup_f \left(\int f \ d\mu - \int f \ d\nu\right) 
= \norm{F_\mu - F_\nu}_{L^1(\R)}.
\end{equation}
Here $\pi$ varies over all probability measures on $\R\times \R$ with
marginals $\mu$ and $\nu$; $f$ varies over all Lipschitz continuous
functions $\R\to \R$ with Lipschitz constant at most $1$; and $F_\mu$,
$F_\nu$ are the cumulative distribution functions of $\mu$, $\nu$. All
three characterizations will be used in this note; for the equalities
see \cite[Chapter 1]{Villani}.

Theorem \ref{T:main} is a coordinate-free analogue of a recent result
of Chatterjee and Ledoux \cite{CL}, which considered the empirical
spectral measure of a random $k\times k$ principal submatrix of a
fixed $n\times n$ Hermitian matrix. The approach taken in \cite{CL} is
rather different than the one taken here; the result of \cite{CL} is
also given in terms of the Kolmogorov distance between measures,
rather than Wasserstein distance.  See section \ref{S:discussion}
below for a more detailed comparison of the results.

\section{Proof of Theorem \ref{T:main}}\label{S:proof}

Throughout this section let $\mathcal{H}$ and $T$ be fixed, and let
$\mu_E$ and $\mu$ be as defined in the statement of the theorem.  For
brevity we write $\sigma_k=\sigma_k(T)$ and $\rho=\rho(T)$.  The
notation $\lesssim B$ means $\le c B$, where $c>0$ is some absolute
constant.

Recall that the Grassmann manifold $G_k(\mathcal{H})$ of
$k$-dimensional subspaces of $\mathcal{H}$ is equipped with the metric
\[
d(E,F) = \inf \sqrt{\sum_{i=1}^k \norm{e_i - f_i}^2},
\]
where the infimum is over all orthonormal bases $\{e_1,\dotsc, e_k\}$
and $\{f_1,\dotsc,f_k\}$ of $E$ and $F$ respectively.

\begin{lemma}\label{T:Lipschitz}
For any $E,F\in G_k(\mathcal{H})$, $d_1(\mu_E,\mu_F) \le
\frac{2\sigma_k}{\sqrt{k}} d(E,F)$.
\end{lemma}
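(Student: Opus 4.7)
The plan is to reduce the Wasserstein distance between $\mu_E$ and $\mu_F$ to a Hilbert--Schmidt distance between the compressions expressed in aligned orthonormal bases, and then to exploit the shift invariance of that Hilbert--Schmidt distance to replace a crude operator norm by $\sigma_k$.

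First I would fix orthonormal bases $\{e_1,\dotsc,e_k\}$ of $E$ and $\{f_1,\dotsc,f_k\}$ of $F$ achieving (or approaching) the infimum in the definition of $d(E,F)$, so that $\sum_{i=1}^k \norm{e_i-f_i}^2 = d(E,F)^2$. With respect to these bases, $T_E$ and $T_F$ become $k\times k$ Hermitian matrices. The Hoffman--Wielandt inequality yields $\sum_{i=1}^k \abs{\lambda_i(T_E)-\lambda_i(T_F)}^2 \le \norm{T_E-T_F}_{HS}^2$ when the eigenvalues are sorted in the same order, and then Cauchy--Schwarz together with the standard formula $d_1(\mu_E,\mu_F)=\frac{1}{k}\sum_{i=1}^k \abs{\lambda_i(T_E)-\lambda_i(T_F)}$ for Wasserstein distance between sorted atomic measures of equal mass gives
\[
d_1(\mu_E,\mu_F) \le \frac{1}{\sqrt{k}}\norm{T_E - T_F}_{HS}.
\]

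Next I would observe the crucial shift invariance: since $\inprod{e_j}{e_i}=\inprod{f_j}{f_i}=\delta_{ij}$, the matrix entries $(T_E)_{ij}-(T_F)_{ij} = \inprod{Te_j}{e_i}-\inprod{Tf_j}{f_i}$ are unchanged if $T$ is replaced by $T-\lambda I$. Choose $\lambda$ attaining the infimum in $\sigma_k$ and set $T'=T-\lambda I$, so that $\norm{T'}_{op} \le \norm{T'}_{(k),2} = \sigma_k$. Writing
\[
\inprod{T'e_j}{e_i}-\inprod{T'f_j}{f_i} = \inprod{T'(e_j-f_j)}{e_i} + \inprod{T'f_j}{e_i-f_i}
\]
and applying the triangle inequality together with the Parseval-type bounds $\sum_i \abs{\inprod{T'v}{e_i}}^2 = \norm{\pi_E T' v}^2 \le \norm{T'}_{op}^2\norm{v}^2$ and $\sum_j \abs{\inprod{T'f_j}{w}}^2 = \norm{\pi_F T' w}^2 \le \norm{T'}_{op}^2\norm{w}^2$ should give $\norm{T_E-T_F}_{HS}^2 \le 4\norm{T'}_{op}^2 d(E,F)^2 \le 4\sigma_k^2 d(E,F)^2$, and the lemma follows.

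The only real obstacle I anticipate is bookkeeping: one has to verify that the cross term in the decomposition of $(T_E-T_F)_{ij}$ can be summed symmetrically, and that using $\norm{T'}_{op}\le\sigma_k$ (rather than a more refined bound involving the top $k$ singular values) is strong enough for the constants to work out. Since the self-adjointness of $T$ lets me freely move $T'$ between the two arguments of the inner product, both terms are controlled by the same operator-norm estimate, and the factor of $2$ appearing in the statement is exactly what one gets from the splitting above.
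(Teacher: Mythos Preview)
Your argument is correct and follows essentially the same route as the paper: bound $d_1$ by the Hilbert--Schmidt distance of the compressions via Hoffman--Wielandt/Lidskii, split each matrix entry as $\inprod{T'(e_j-f_j)}{e_i}+\inprod{T'f_j}{e_i-f_i}$, and invoke shift invariance to pass from $T$ to $T'=T-\lambda I$. The only cosmetic difference is that the paper applies Cauchy--Schwarz entrywise and then bounds $\sum_j\norm{T'e_j}^2\le\norm{T'}_{(k),2}^2=\sigma_k^2$, whereas you sum one index by Bessel's inequality and use $\norm{T'}_{op}\le\norm{T'}_{(k),2}=\sigma_k$; both estimates land on the same constant.
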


\begin{proof}
Define a coupling $\pi$ of $\mu_E$ and $\mu_F$ by
$\pi = \frac{1}{k} \sum_{i=1}^k \delta_{(\lambda_i(T_E),\lambda_i(T_F))}$.
Then
\[
d_1(\mu_E,\mu_F) \le \frac{1}{k} \sum_{i=1}^k
\abs{\lambda_i(T_E)-\lambda_i(T_F)} \le \sqrt{\frac{1}{k} \sum_{i=1}^k
  \abs{\lambda_i(T_E)-\lambda_i(T_F)}^2}.
\]
Now if $\{e_1,\dotsc,e_k\}$ and $\{f_1,\dotsc,f_k\}$ are orthonormal
bases of $E$ and $F$, then the matrices of $T_E$ and $T_F$ with
respect to these bases are $\big[\inprod{T(e_j)}{e_i}\big]_{i,j=1}^k$,
$\big[\inprod{T(f_j)}{f_i}\big]_{i,j=1}^k$ respectively.  As a consequence 
of Lidskii's theorem (see \cite[$\S$ III.4]{Bhatia}), for any
$k\times k$ Hermitian matrices $A$ and $B$,
\[
\sqrt{\sum_{i=1}^k \abs{\lambda_i(A)-\lambda_i(B)}^2}
\le \norm{A-B}_{HS} = \sqrt{\sum_{i,j=1}^k \abs{a_{ij}-b_{ij}}^2}.
\]
Thus by the self-adjointness of $T$ and the Cauchy-Schwarz inequality,
\begin{equation*}\begin{split}
d_1(\mu_E,\mu_F)
&\le \sqrt{\frac{1}{k} \sum_{i,j=1}^k \abs{\inprod{T(e_j)}{e_i}
  - \inprod{T(f_j)}{f_i}}^2}
= \sqrt{\frac{1}{k} \sum_{i,j=1}^k \abs{\inprod{T(e_j)}{e_i-f_i}
  + \inprod{e_j-f_j}{T(f_i)}}^2} \\
& \le \sqrt{\frac{1}{k} \sum_{i,j=1}^k \left[\norm{T(e_j)}\norm{e_i-f_i}
  + \norm{T(f_i)}\norm{e_j-f_j}\right]^2} \\
& \le \frac{d(E,F)}{\sqrt{k}}\left(\sqrt{\sum_{j=1}^k \norm{T(e_j)}^2}
  + \sqrt{\sum_{i=1}^k \norm{T(f_i)}^2}\right)
\le \frac{2d(E,F)}{\sqrt{k}} \norm{T}_{(k),2}.
\end{split}\end{equation*}
Observing that $d_1(\mu_E,\mu_F)$ is invariant under addition of a real
scalar matrix to $T$, the lemma is proved.
\end{proof}

The same proof as above can be carried out (and is slightly simpler)
with the Kantorovich-Rubinstein distance replaced by the
$L^2$-Wasserstein distance, although this observation will not be used
here.

The following concentration inequality goes back to Gromov and Milman
\cite{GM}; see also section 2.1 of \cite{Ledoux} where it is pointed
out explicitly that the same result applies in the complex case.

\begin{thm}\label{T:GM}
Let $f:G_k(\mathcal{H}) \to \R$ be 1-Lipschitz with respect to the
metric $d$ on $G_k(\mathcal{H})$, and let $E\in G_k(\mathcal{H})$ be
distributed according to the rotation-invariant probability measure on
$G_k(\mathcal{H})$.  Then
\[
\Prob\big[\abs{f(E)-\E f(E)}\ge t\big] \lesssim \exp\big[-cnt^2\big]
\]
for $t>0$, where $c>0$ is an absolute constant.
\end{thm}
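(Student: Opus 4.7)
The plan is to reduce the concentration inequality on $G_k(\mathcal{H})$ to the classical Gromov--Milman concentration estimate on the unitary group $U(\mathcal{H})$ (or the orthogonal group in the real case), equipped with Haar measure and the Hilbert--Schmidt metric $\norm{U-V}_{HS}$. First I would fix a reference subspace $E_0 \in G_k(\mathcal{H})$ with an orthonormal basis $\{v_1,\dotsc,v_k\}$ (completed to an orthonormal basis $\{v_1,\dotsc,v_n\}$ of $\mathcal{H}$), and define the quotient map $\Phi: U(\mathcal{H}) \to G_k(\mathcal{H})$ by $\Phi(U) = U(E_0)$. Since $U(\mathcal{H})$ acts transitively on $G_k(\mathcal{H})$ and the rotation-invariant probability measure on $G_k(\mathcal{H})$ is the unique such measure, the pushforward of Haar measure under $\Phi$ is exactly the measure appearing in the statement.

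Next I would verify that $\Phi$ is 1-Lipschitz from $(U(\mathcal{H}),\norm{\cdot}_{HS})$ to $(G_k(\mathcal{H}),d)$. Given $U,V \in U(\mathcal{H})$, the sets $\{Uv_i\}_{i=1}^k$ and $\{Vv_i\}_{i=1}^k$ are orthonormal bases of $\Phi(U)$ and $\Phi(V)$ respectively, so taking these bases in the defining infimum for $d$ gives
\[
d(\Phi(U), \Phi(V)) \le \sqrt{\sum_{i=1}^k \norm{(U-V)v_i}^2}
\le \sqrt{\sum_{i=1}^n \norm{(U-V)v_i}^2} = \norm{U-V}_{HS}.
\]
Consequently $\tilde f := f \circ \Phi : U(\mathcal{H}) \to \R$ is 1-Lipschitz with respect to $\norm{\cdot}_{HS}$, and if $U$ is Haar-distributed then $\tilde f(U)$ has the same distribution as $f(E)$, hence the same mean and the same deviation probabilities.

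The problem thus reduces to the statement that for any 1-Lipschitz $g:U(\mathcal{H})\to\R$ (with respect to $\norm{\cdot}_{HS}$) and $U$ Haar-distributed,
\[
\Prob\bigl[\abs{g(U)-\E g(U)}\ge t\bigr] \lesssim \exp[-cnt^2].
\]
This is the essential content of \cite{GM} and is the main obstacle: one must extract a factor of $n$ in the exponent from the dimension-dependent geometry of $U(\mathcal{H})$. The standard route is to compute the Ricci curvature of the bi-invariant Riemannian metric on $U(\mathcal{H})$, which is bounded below by a constant multiple of $n$; the Bakry--\'Emery criterion then produces a logarithmic Sobolev inequality with constant of order $1/n$, and the Herbst argument converts this into the claimed sub-Gaussian tail bound. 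Equivalent derivations via spectral gap/Poincar\'e inequalities, Ornstein--Uhlenbeck semigroup estimates, or coupling constructions as in \cite{Ledoux} all proceed by essentially the same curvature computation. In the real case the identical argument applies on $O(\mathcal{H})$, so the stated inequality holds uniformly for real and complex $\mathcal{H}$.
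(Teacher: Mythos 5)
Your reduction from the Grassmannian to the unitary (or orthogonal) group via the quotient map $\Phi(U) = U(E_0)$ is exactly the standard route, and it is what the paper's cited references (Gromov--Milman, and \S 2.1 of Ledoux) do; the paper itself simply cites the result without giving an argument. Your Lipschitz estimate for $\Phi$ and the identification of the pushforward of Haar measure with the rotation-invariant measure on $G_k(\mathcal{H})$ are both correct.

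One inaccuracy in the final step is worth flagging: the Ricci curvature of the bi-invariant metric on $U(\mathcal{H})\cong U(n)$ is \emph{not} bounded below by a constant multiple of $n$. The Lie algebra $\mathfrak{u}(n)$ has a one-dimensional center $i\R I$ on which $\mathrm{ad}$ vanishes, so the Ricci curvature is zero in that direction, and the Bakry--\'Emery criterion does not directly give a logarithmic Sobolev constant of order $1/n$ on all of $U(n)$. The standard fixes in this context are either to replace $U(n)$ by $SU(n)$, which also acts transitively on $G_k(\C^n)$ and whose Ricci curvature genuinely is $\gtrsim n$ (so your argument runs verbatim with $\Phi: SU(n)\to G_k(\C^n)$), or to observe that the lift $\tilde f = f\circ\Phi$ is constant on cosets of the stabilizer $U(k)\times U(n-k)$, hence in particular along the central circle, so the flat direction contributes nothing. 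In the real case $SO(n)$ has no such degeneracy and the argument applies directly. With this correction your proof sketch is sound and matches the approach underlying the citation.
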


Observe that \eqref{E:mean}, Lemma \ref{T:Lipschitz}, and Theorem
\ref{T:GM} together imply \eqref{E:tail}, so it suffices now to prove
\eqref{E:mean}.

Let $E\in G_k(\mathcal{H})$ be distributed according to the
rotation-invariant probability measure on $G_k(\mathcal{H})$. For a
given function $f:\R\to \R$, define the random variable $X_f = \int f
d\mu_E - \int f d\mu$. By Lemma \ref{T:Lipschitz} and Theorem
\ref{T:GM}, for functions $f,g$
\begin{equation}\label{E:increment}
\Prob\big[\abs{X_f-X_g} \ge t\big] = \Prob\big[\abs{X_{f-g}}\ge t]
  \lesssim \exp\left[-c \frac{kn}{\sigma_k^2\abs{f-g}_L^2}\right]
\end{equation}
for $t \ge 0$, where $\abs{f}_L$ denotes the Lipschitz constant of
$f$. 

The inequality \eqref{E:increment} shows that the random process
$X_f$, indexed by some family $\mathcal{F}$ of Lipschitz continuous
test functions (to be determined), satisfies a subgaussian increment
condition with respect to the norm $\nnorm{\cdot} =
\frac{\sigma_k}{\sqrt{kn}}\norm{\cdot}_{C^1}$ on $\mathcal{F}$ (here, 
$\|f\|_{C^1}:=\max\{\|f\|_\infty,\|f'\|_\infty\}$, so that for $f\in C^1$,
$|f|_L\le \|f\|_{C^1}$). This
raises the possibility to estimate its expected supremum by Dudley's
entropy bound \cite{Dudley} (see also \cite{Talagrand}):
\begin{equation}\label{E:Dudley}
\E \sup_{f\in \mathcal{F}} X_f \lesssim \int_0^\infty
\sqrt{\log N(\mathcal{F}, \nnorm{\cdot}, \eps)} \ d\eps,
\end{equation}
where $N(\mathcal{F}, \nnorm{\cdot}, \eps)$ is minimum number of sets
of diameter $\eps$ with respect to $\nnorm{\cdot}$ needed to cover
$\mathcal{F}$. Since $\mu_E$ and $\mu$ are supported on
$[\lambda_n,\lambda_1]$,
\[
d_1(\mu_E,\mu) = \sup \big\{ X_f : \abs{f}_L\le 1\big\} =
\sup\big\{ X_f : \abs{f}_L\le 1, \norm{f}_\infty \le 2\rho\big\}.
\]
Thus to prove \eqref{E:mean}, it suffices to estimate $\E \sup_{f\in
  \mathcal{F}} X_f$ for $\mathcal{F} = \{ f : \norm{f}_{C^1} \le 1 +
2\rho\}$.  However, since $C^1$ is an infinite dimensional function
space, for this choice of $\mathcal{F}$ the covering numbers
$N(\mathcal{F}, \nnorm{\cdot}, \eps)$ in \eqref{E:Dudley} will always
be infinite for small $\eps$.

Instead, define $\mathcal{F} = \{ f : \norm{f}_{C^2} \le 1\}$, where
$\|f\|_{C^2}:=\max\{\|f\|_\infty,\|f'\|_\infty,\|f''\|_\infty\}$. The
covering numbers $N(\mathcal{F}, \norm{\cdot}_{C^1}, \eps)$ can be
estimated using the methods of \cite[$\S$ 2.7]{VW}; see \cite{EMeckes}
for explicit estimates which, combined with
\eqref{E:Dudley} and a linear change of variables, yield
\begin{equation}\label{E:C2}
\E \sup \big\{X_f : \norm{f}_{C^2} \le 1 \big\}
  \lesssim \frac{\sigma_k}{\sqrt{kn}}\int_0^{1} \sqrt{1 + \log\frac{1}{\eps} 
    + \frac{1}{\eps}(\rho + 1)} \ d\eps
  \lesssim \frac{\sigma_k \sqrt{\rho + 1}}{\sqrt{kn}}.
\end{equation}

The bound \eqref{E:mean} is now derived from \eqref{E:C2} via a
smoothing and scaling argument.  Fix $f:\R\to \R$ with $\abs{f}_L \le
1$ and $\norm{f}_\infty \le 2\rho$. Let $\varphi:\R\to \R$ be a smooth
probability density with finite first absolute moment and $\varphi'\in
L^1(\R)$. For $t>0$ define $\varphi_t(x) = \frac{1}{t}
\varphi(\frac{x}{t})$, and let $g_t = f * \varphi_t$. Then
\begin{equation}\label{E:g_t}
\norm{g_t}_\infty \le \norm{f}_\infty \norm{\varphi_t}_1 \le 2\rho,
\quad\qquad \norm{g_t'}_\infty \le \abs{f}_L \norm{\varphi_t}_1 \le 1, \qquad
\quad
\norm{g_t''}_\infty \le \abs{f}_L \norm{\varphi_t'}_1 \lesssim
\frac{1}{t}.
\end{equation}
Now for any probability measure $\nu$ on $\R$,
\[
\abs{\int f \ d\nu - \int g_t \ d\nu} = \abs{\int \int [f(x) -
    f(x-y)]\varphi(y) \ dy \ d\nu(x)} \lesssim t.
\]
Thus
\begin{equation*}\begin{split}
\abs{X_f} & \le \abs{\int f\ d\mu_E - \int g_t\ d\mu_E}
  + \abs{\int g_t \ d\mu_E - \int g_t \ d\mu} 
  + \abs{\int g_t \ d\mu - \int f\ d\mu}\\
& \lesssim t + \norm{g_t}_{C^2} 
  \sup \big\{X_h : \norm{h}_{C^2} \le 1 \big\},
\end{split}\end{equation*}
and so by \eqref{E:C2} and \eqref{E:g_t},
\[
\E d_1(\mu_E,\mu) \lesssim t + \left(1+2\rho+ \frac{1}{t}\right) 
  \frac{\sigma_k\sqrt{\rho+1}}{\sqrt{kn}}.
\]
Picking $t$ of the order
$\frac{\sqrt{\sigma_k}(\rho+1)^{1/4}}{(kn)^{1/4}}$ yields
\begin{equation}\label{E:needs-rescaling}
\E d_1(\mu_E,\mu) \lesssim \frac{\sqrt{\sigma_k}(\rho+1)^{1/4}}{(kn)^{1/4}}
  + \frac{\sigma_k (\rho+1)^{3/2}}{\sqrt{kn}}.
\end{equation}

Now apply \eqref{E:needs-rescaling} with the operator $T$ replaced by
$sT$ for $s>0$. It is easy to check that the Kantorovich-Rubinstein
distance $d_1(\mu_E,\mu)$ is homogeneous with respect to this rescaling,
as are $\sigma_k$ and $\rho$. Thus one obtains
\[
\E d_1(\mu_E,\mu) \lesssim \frac{1}{s} 
  \left(\frac{\sqrt{s\sigma_k}(s\rho+1)^{1/4}}{(kn)^{1/4}}
  + \frac{s\sigma_k (s\rho+1)^{3/2}}{\sqrt{kn}}\right).
\]
Picking $s$ of the order $\frac{(kn)^{1/7}}{\sigma_k^{2/7} \rho^{5/7}}$ now
yields \eqref{E:mean}. \qed

\section{Discussion}\label{S:discussion}

In \cite{CL}, Chatterjee and Ledoux proved a version of Theorem
\ref{T:main} for principal submatrices.  Namely, let $\mathcal{H} =
\C^n$, and suppose $E$ is now uniformly distributed among
$k$-dimensional \emph{coordinate} subspaces of $\C^n$. Then
\cite{CL} shows that
\begin{equation}\label{E:CL-tail}
\Prob\big[d_\infty(\mu_E,\mu) \ge k^{-1/2} + t\big] \le 12\sqrt{k}
e^{-t\sqrt{k/8}}
\end{equation}
for $t>0$, and consequently
\begin{equation}\label{E:CL-mean}
\E d_\infty(\mu_E,\mu) \le \frac{13+\sqrt{8}\log k}{\sqrt{k}}.
\end{equation}
Here $d_\infty(\mu,\nu) = \norm{F_\mu - F_\nu}_\infty$ is the
Kolmogorov distance between probability measures $\mu$ and $\nu$ on
$\R$.

It is likely that the methods of this paper could be used to prove a
result in the setting of \cite{CL}, by replacing Theorem \ref{T:GM},
which follows from concentration inequalities on the unitary or
special orthogonal group, with an appropriate concentration inequality
on the symmetric group $S_n$.  Furthermore, it may be possible to
prove a result in the setting of this paper using methods related to
those of \cite{CL}, such as adapting the approach of
Chatterjee in \cite{Chatterjee}.  Below some quantitative 
comparison will be offered between
Theorem \ref{T:main} and the result of \cite{CL}, ignoring the fact
that the random subspace $E$ has a different distribution in each
setting.  In particular, the distribution of $E$ is probably
responsible for the difference between the subexponential tail decay
in \eqref{E:CL-tail} and the subgaussian tail decay in \eqref{E:tail}.

Before discussing more specific quantitative comparisons, we note that
the clearest difference between the two results is that ours is
coordinate-free.  While there are settings in which coordinates have 
meaning and thus coordinate-oriented results are natural, there are 
many settings in which there is no clearly preferred basis in which to
view an operator.  Take, for example, the Laplacian $\Delta$ on the sphere $
\mathbb{S}^{n-1}$. It has eigenvalues (up to sign convention) 
$0>-\lambda_1>-\lambda_2>\cdots\to-\infty$, and the corresponding 
eigenspaces are multidimensional.  If one took $\mathcal{H}$ to be the span of
the first $m$ eigenspaces, with $T=\Delta|_{\mathcal{H}},$
there is no canonical choice of basis within each eigenspace, and so 
it would seem more natural to consider compressions of $T$ to {\em all}
subspaces of a given dimension, rather than only to the coordinate 
subspaces for some choice of basis.

Comparisons of the results are made somewhat difficult as the 
Kantorovich-Rubinstein distance $d_1$ and the Kolmogorov distance
$d_\infty$ are not comparable in general. However, since the measures
here are all supported in the interval $[\lambda_n(T),\lambda_1(T)]$,
from the third representation of $d_1$ in \eqref{E:KR} one obtains the
estimate 
\begin{equation}\label{d1dinfty}
d_1(\mu_E,\mu) \le 2\rho(T) d_\infty(\mu_E,\mu)
\end{equation}
in the
present context. This estimate is related to a qualitative difference
between $d_1$ and $d_\infty$: whereas $d_1$ is homogeneous with
respect to a rescaling of the supports of measures (a fact which was
exploited in the proof of Theorem \ref{T:main}), $d_\infty$ is
invariant under rescaling. Which behavior is more convenient may vary
by the context.

Inequality \eqref{d1dinfty} makes 
some quantitative comparisons between the results of \cite{CL} and
Theorem \ref{T:main} possible.  Observe that \eqref{E:CL-tail} and
\eqref{E:CL-mean} only yield nontrivial information if $k \gg 1$
(which of course requires $n\gg 1$), whereas under appropriate
scaling, Theorem \ref{T:main} is nontrivial for $n\gg 1$ even if $k$
is small.  In particular, \eqref{E:CL-tail} and \eqref{d1dinfty} imply
that the fluctuations of $d_1(\mu_E,\mu)$ above its mean are of order
(ignoring logarithmic factors) at most $k^{-1/2}\rho(T)$, whereas
\eqref{E:tail} together with the general estimate $\sigma_k(T)\le
\sqrt{k}\rho(T)$ yields fluctuations of order at most $(kn)^{-1/2}
\sigma_k(T) \le n^{-1/2}\rho(T)$.  

The issue of the expected distance is more complicated.
The general
estimate $\rho(T) \le \sigma_k(T)$ and inequalities \eqref{E:CL-mean}
and \eqref{d1dinfty} imply that
\begin{equation}\label{E:CL-d1}
\E d_1(\mu_E,\mu) \le c \frac{\rho(T) \log k}{\sqrt{k}}
\le c \frac{\sigma_k(T)^{4/7}\rho(T)^{3/7}\log k}{\sqrt{k}},
\end{equation}  which is slightly weaker than \eqref{E:mean} for $k$
large (in which case the lossy estimates used to arrive at \eqref{E:CL-d1}
mean that the comparison should probably not be taken too seriously) 
and significantly weaker for $k$ small.  
Since the
different distributions of $E$ are being ignored here there is little
point in making the comparison very precise.

Finally, the comparison of fluctuations highlights that the methods
of this paper are more sensitive to the proximity of $T$ to the space
of scalar operators.  If $T$ is a (real) scalar operator then $\mu_E$
is a constant point mass, so it is natural to expect that if $T$ is
nearly scalar in some sense then $\mu_E$ will be more tightly
concentrated then in general.  The results of \cite{CL} do not
directly reflect this at all, although the estimate \eqref{d1dinfty}
allows one to insert this effect by
hand when changing metrics.  However, $\sigma_k(T)$ provides a sharper
measure than $\rho(T)$ of how close $T$ is to scalar, and in some cases
the bound $(kn)^{-1/2} \sigma_k(T)$ on the order of the fluctuations
may be even much smaller than $n^{-1/2}\rho(T)$. This is
the case, for example, if $T$ has a large number of tightly clustered
eigenvalues with a small number of outliers.

\bibliographystyle{plain} 
\bibliography{compressions}

\end{document}